\documentclass{article}
\usepackage{amsmath,amsthm,amscd,amssymb}
\usepackage[mathscr]{eucal}
\usepackage{amssymb}
\usepackage{latexsym}

\textwidth=150mm
\oddsidemargin=5mm
\evensidemargin=5mm
\topmargin=0mm \textheight=220mm

\theoremstyle{definition}
\newtheorem{Def}{Definition}[section]
\newtheorem{Thm}[Def]{Theorem}

\newtheorem{Rem}[Def]{Remark}
\newtheorem{Ex}[Def]{Example}

\newtheorem{Prob}[Def]{Problem}
\newtheorem{Lem}[Def]{Lemma}

\numberwithin{equation}{section}

\begin{document}

\title{On $p$-adic Siegel modular forms of non-real Nebentypus of degree $2$}

\author{Toshiyuki Kikuta}
\maketitle




\begin{abstract}
We show that all Siegel modular forms of non-real Nebentypus for $\Gamma_0^{(2)}(p)$ are $p$-adic Siegel modular forms by using a Maass lift. 
\end{abstract}

\maketitle

\noindent 
{\bf Mathematics subject classification}: Primary 11F33 \and Secondary 11F46\\
\noindent
{\bf Key words}: $p$-adic modular forms, Nebentypus
\section{Introduction}
\label{intro}
In \cite{Se}, Serre defined the notion of $p$-adic modular forms and applied it to the construction of a $p$-adic $L$-function. Recently, several people attempted to generalize this notion to that of the case of several variables. In particular, B\"ocherer-Nagaoka \cite{Bo-Na} defined the $p$-adic Siegel modular forms and showed that all Siegel modular forms with level $p$ and real Nebentypus are $p$-adic Siegel modular forms. The aim of this paper is to generalize it to the case of non-real Nebentypus. 

We state our results more precisely. Let $k$ be a positive integer, $p$ an odd prime and $\chi $ a Dirichlet character modulo $p$ with $\chi (-1)=(-1)^k$. For the congruence subgroup $\Gamma _0^{(n)}(p)$ of the symplectic group $\Gamma _n=Sp_n(\mathbb{Z})$, we denote by $M_{k}(\Gamma _0^{(n)}(p),\chi )$ the space of corresponding Siegel modular forms of weight $k$ and character $\chi $. For a subring $R$ of $\mathbb{C}$, let $M_{k}(\Gamma _0^{(n)}(p),\chi )_{R}\subset M_{k}(\Gamma _0^{(n)}(p),\chi )$ denote the $R$-module of all modular forms whose Fourier coefficients belong to $R$. Let $\mu _{p-1}$ denote the group of the ($p-1$)-th roots of unity in $\mathbb{C}^{\times}$. We fix an embedding $\sigma $ from $\mathbb{Q}(\mu_{p-1})$ to $\mathbb{Q}_p$ (see Subsection \ref{Cyc}). The following theorem is our main result:    
\begin{Thm} 
\label{ThmM}
For any modular form $F\in M_k(\Gamma _0^{(2)}(p),\chi )_{\mathbb{Q}(\mu _{p-1})}$, $F^{\sigma }$ is a $p$-adic Siegel modular form. In other words, there exists a sequence of full modular forms $\{G_{k_m}\}$ such that 
\begin{align*}
\lim _{m\to \infty }G_{k_m}=F^\sigma \quad (p{\text -adically}).
\end{align*}  
\end{Thm}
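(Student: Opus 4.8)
The plan is to use the Maass lift $V$ to reduce the degree-$2$ statement to the one-variable theory of \cite{Se} (as applied in \cite{Bo-Na}), arranging the limiting process so that a single $p$-adic limit absorbs both the Nebentypus $\chi$ and the level $p$ at once. The lift $V$ attaches to a Jacobi form $\phi$ of weight $k$ and index $1$ (of the relevant level and character), with Fourier coefficients $c(D)$, a Siegel modular form $V(\phi)\in M_k(\Gamma_0^{(2)}(p),\chi)$ whose Fourier coefficient at a positive semi-definite half-integral $T$ is
\[
A(T)=\sum_{d\mid\varepsilon(T),\,(d,p)=1} d^{\,k-1}\,c\!\left(\tfrac{\det(2T)}{d^2}\right),
\]
where $\varepsilon(T)$ denotes the content of $T$ and the condition $(d,p)=1$ is the $p$-stabilization forced by the level. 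Through the theta decomposition the datum $(c(D))_D$ is equivalent to a tuple of elliptic modular forms of level $p$ and character $\chi$, to which the one-variable $p$-adic theory applies directly.

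The core of the argument is to realize the approximating sequence at the level of this formula. By \cite{Se} in the form used in \cite{Bo-Na}, the $\sigma$-image of an elliptic modular form of level $p$ and character $\chi$ with coefficients in $\mathbb{Q}(\mu_{p-1})$ is a $p$-adic modular form; writing $\chi$ as a power of the Teichm\"uller character and replacing that power $p$-adically by a weight-shift is exactly what produces the admissible exponents $k_m$. Feeding this into $V$, I would choose \emph{full} (level-$1$) forms $\psi_m$ of weight $k_m$, with coefficients $c_m(D)$, such that $c_m(D)\to c(D)^{\sigma}$ and $k_m\to\infty$ $p$-adically with $k_m\equiv k\pmod{p-1}$. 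The continuity observation that makes everything fit is that in the full lift the factor $d^{\,k_m-1}$ converges to $d^{\,k-1}$ when $(d,p)=1$ and to $0$ when $p\mid d$; hence the Fourier coefficients of the full Maass lifts $V_{k_m}(\psi_m)$ (now of weight $k_m$ and level $1$) converge $p$-adically precisely to the $p$-stabilized coefficients $A(T)$ above. Setting $G_{k_m}:=V_{k_m}(\psi_m)$, these are full Siegel modular forms with $\lim_m G_{k_m}=V(\phi)^{\sigma}$, which proves the theorem for every $F$ in the image of $V$.

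It remains to treat an \emph{arbitrary} $F$, and this is the step I expect to be the genuine obstacle, since the Maass Spezialschar is in general a proper subspace of $M_k(\Gamma_0^{(2)}(p),\chi)$. The Eisenstein contribution I would dispatch separately: the Siegel- and Klingen--Eisenstein series with character $\chi$ have Fourier coefficients built from generalized Bernoulli numbers and special values of the Dirichlet $L$-functions attached to $\chi$, and the Kubota--Leopoldt interpolation of these quantities (read through $\sigma$) exhibits them directly as $p$-adic limits of Fourier coefficients of full Eisenstein series. The hard point is the cuspidal complement of the Maass space: one must show that, for \emph{non-real} $\chi$ in degree $2$, every cusp form in $M_k(\Gamma_0^{(2)}(p),\chi)$ is reached by the lift, so that nothing survives outside the scope of the preceding paragraph. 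This is precisely where the non-reality of $\chi$ should be essential, and proving this structural surjectivity of $V$ (or a substitute spanning statement) is the crux; granting it, linearity of the $p$-adic limit lets the sequences built for the Maass, Eisenstein, and any residual components be assembled into a single sequence $\{G_{k_m}\}$ converging to $F^{\sigma}$.
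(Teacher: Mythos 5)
Your proposal has a genuine gap, and you have correctly identified where it is: the argument only proves the theorem for forms $F$ lying in the image of the Maass lift, and the surjectivity (or spanning) statement you would need to handle an arbitrary $F$ is not available. The Maass Spezialschar is in general a \emph{proper} subspace of $M_k(\Gamma_0^{(2)}(p),\chi)$ --- already in level one the Spezialschar is a small piece of the space of Siegel modular forms of degree $2$ once the weight is large, and nothing about the non-reality of $\chi$ changes this; the non-reality of $\chi$ is the \emph{hypothesis} being treated in this paper, not a mechanism that forces every cusp form to be a lift. So the ``crux'' you defer is not a technical loose end but a false (or at best wide open) structural claim, and the decomposition into Maass part, Eisenstein part, and ``residual'' part leaves the residual part untreated. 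The convergence analysis you carry out for lifted forms (the factor $d^{k_m-1}$ tending $p$-adically to $d^{k-1}$ for $(d,p)=1$ and to $0$ for $p\mid d$) is sound as far as it goes, but it cannot be extended beyond the Spezialschar.

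The paper avoids this entirely by using the Maass lift for a much more modest purpose: not to realize $F$ itself, but to construct a single auxiliary sequence $G_{k_m}\in M_{k_m}(\Gamma_0^{(2)}(p),\chi^{-1})$ (lifts of products of Eisenstein series and a Jacobi Eisenstein series) with $G_{k_m}^{\sigma}\to 1$ $p$-adically. Multiplying the \emph{arbitrary} form $F$ by $G_{k_m}$ kills the Nebentypus, so $FG_{k_m}$ has trivial character and level $p$; a rationality lemma (via Shimura) writes it as a $\mathbb{Q}(\mu_{p-1})$-linear combination of forms with rational coefficients, and the B\"ocherer--Nagaoka theorem for trivial character then shows $(FG_{k_m})^{\sigma}=F^{\sigma}G_{k_m}^{\sigma}$ is a $p$-adic Siegel modular form; letting $m\to\infty$ gives $F^{\sigma}$. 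The lesson is that one should multiply by a unit sequence to transfer the problem to the known trivial-character case, rather than attempt to $p$-adically deform every form through the lift.
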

In Section \ref{Proof}, we prove Theorem \ref{ThmM}. The key point of the proof is the following existence theorem: Let $\omega $ be the Tichm\"uler character on $\mathbb{Z}_p$. 
\begin{Thm}
\label{Thm2}
We take $\alpha \in \mathbb{Z}/(p-1)\mathbb{Z}$ such that $\chi ^{\sigma }=\omega ^{\alpha }$. Then there exists a sequence of modular forms $\{G_{k_m}\in M_{k_m}(\Gamma _0^{(2)}(p),\chi )_{\mathbb{Q}(\mu _{p-1})}\}$ such that 
\begin{align*}
\lim _{m\to \infty } G_{k_m}^{\sigma }=1\quad (p{\text -adically}). 
\end{align*} 
\end{Thm}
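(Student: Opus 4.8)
The strategy I would pursue is to produce the sequence $\{G_{k_m}\}$ as Maass (Saito--Kurokawa) lifts of an explicit one-variable Eisenstein family, and then to read $p$-adic convergence off the resulting closed formula for the Fourier coefficients. The point of routing through the lift, rather than manipulating degree-$2$ Eisenstein series directly, is explicitness: a Maass lift expresses each Fourier coefficient $A_m(T)$ of the degree-$2$ form as a single divisor sum in the coefficients of a half-integral weight (equivalently Jacobi) Eisenstein series, of the shape $A_m(T)=\sum_{d\mid \varepsilon(T)}\chi(d)\,d^{k_m-1}\,c_m(\det(2T)/d^2)$, where $\varepsilon(T)$ is the content of $T$ and $c_m$ the one-variable coefficients. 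This converts the two-variable problem into a $p$-adic estimate on the $c_m$, a harmless analysis of the powers $d^{k_m-1}$, and a check that the lift behaves as claimed.

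First I would fix the arithmetic of the weights, which is where the non-real character is tamed. Because $\chi^\sigma=\omega^\alpha$, a form of weight $k$ and Nebentypus $\chi$ has $p$-adic weight $t\mapsto t^{k}\omega^{\alpha}(t)=\langle t\rangle^{k}\omega(t)^{k+\alpha}$ on $\mathbb{Z}_p^\times$, trivial in the limit exactly when $\langle t\rangle^{k}\to 1$ and $\omega^{k+\alpha}=1$. I therefore choose $k_m\to\infty$ with $k_m\equiv-\alpha\pmod{p-1}$ and $k_m\to 0$ in $\mathbb{Z}_p$ (e.g. $k_m\equiv-\alpha\pmod{(p-1)p^m}$); the parity constraint $\chi(-1)=(-1)^{k_m}$ is then automatic since $p-1$ is even. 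This congruence class is the essential choice: by Kubota--Leopoldt the generalized Bernoulli number $B_{k_m,\chi}$ normalizing the Eisenstein family is interpolated by the branch $\omega^{\alpha+k_m}=\omega^{0}$, i.e.\ the \emph{trivial} branch of the $p$-adic $L$-function, whose only singularity is the pole at $s=1$. Since the argument $1-k_m$ tends to $1$, that pole forces $\lvert B_{k_m,\chi}/k_m\rvert_p\to\infty$, hence $\lvert k_m/B_{k_m,\chi}\rvert_p\to 0$. This vanishing normalization factor is the engine driving $G_{k_m}^\sigma\to 1$.

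Next I would build and normalize the input $\phi_m$: the one-variable Eisenstein series of weight $k_m$ and character $\chi$ (in its half-integral / Cohen--Eisenstein incarnation feeding the lift), normalized so that the lifted form has constant coefficient $A_m(0)=1$. Along the chosen weights, the non-constant one-variable coefficients acquire the factor $\sim k_m/B_{k_m,\chi}\to 0$, so that $\phi_m$ converges $p$-adically to the constant $1$. I would then set $G_{k_m}:=\mathcal M(\phi_m)\in M_{k_m}(\Gamma_0^{(2)}(p),\chi)_{\mathbb{Q}(\mu_{p-1})}$ and verify $A_m(0)=1$ together with $A_m(T)\to 0$ for $T\neq 0$ from the divisor-sum formula, using that $d^{k_m-1}$ is $p$-adically bounded (it tends to $0$ when $p\mid d$ and to a $p$-adic unit times a root of unity when $p\nmid d$), so the smallness of the $c_m$ propagates to $A_m(T)$.

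I expect the genuine difficulty to lie in propagating convergence uniformly through the lift to the full two-variable expansion, in particular across the honestly two-dimensional (rank-two) Fourier coefficients. These mix the divisor sum, the character $\chi$, and products of special $L$-values and local densities, and one must check that in every case the controlling quantity is the trivial-branch $L$-value that blows up, dominating all bounded contributions, and that this holds uniformly in $T$ so the limit is the constant function rather than merely coefficientwise along a fixed $T$. Keeping the correct interpolation branch in view (which is exactly what ties the argument to $k_m\equiv-\alpha\pmod{p-1}$), confirming that the lift lands in $M_{k_m}(\Gamma_0^{(2)}(p),\chi)$ with coefficients in $\mathbb{Q}(\mu_{p-1})$, and carrying out the half-integral/Jacobi bookkeeping at level $p$ are the steps where the non-real Nebentypus genuinely enters and where the main technical work will be concentrated.
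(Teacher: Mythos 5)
Your overall architecture coincides with the paper's: both take $k_m\equiv -\alpha \bmod (p-1)$ with $k_m\to 0$ in $\mathbb{Z}_p$, produce $G_{k_m}$ as the Maass lift of a weight-$k_m$, index-$1$, character-$\chi$ Jacobi form, normalize so that the constant term is $1$, and conclude $G_{k_m}^\sigma\to 1$ because the normalizing value $L(1-k_m,\chi)^\sigma=\zeta^*(1-k_m,1-k_m-\alpha)$ approaches the pole of the $p$-adic zeta function at $(1,1)$, so its inverse tends to $0$ and kills every non-constant Fourier coefficient of the lift. That mechanism is exactly the paper's.

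The one place where you diverge is the choice of the Jacobi input, and it is precisely the place where your argument is not yet a proof. You propose to lift the weight-$k_m$ Jacobi (Cohen--Eisenstein) series with character $\chi$ itself; its coefficients $c_m(n,r)$ are then ratios of special $L$-values and local densities varying with $m$, and for the divisor-sum formula to yield $A_m(T)\to 0$ uniformly in $T$ you need the $c_m(n,r)^\sigma$ to be $p$-adically bounded uniformly in $(n,r)$ and $m$. You correctly flag this as ``the main technical work,'' but you do not carry it out, and it is not a formality. The paper dissolves the difficulty by a different choice of input: with $k_m=ap^m$ it sets $\phi_{k_m}=E^{(1)}_{a(p-2),\chi}\,E^{(1)}_{ap(p^{m-1}-1)}\,E^J_{2a,1}$, so the Jacobi Eisenstein factor $E^J_{2a,1}$ has \emph{fixed} weight $2a$ and rational coefficients, the character is carried by the \emph{fixed} Hecke Eisenstein series $E^{(1)}_{a(p-2),\chi}$, and all $m$-dependence sits in the level-one elliptic Eisenstein series $E^{(1)}_{ap(p^{m-1}-1)}$, whose $p$-adic convergence and $p$-integrality (via von Staudt--Clausen applied to $B_{l_m}/2l_m$) are classical Serre theory. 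Uniform boundedness of the $c_{k_m}(n,r)^\sigma$ is then immediate, and the entire proof reduces to the single blow-up of $\zeta^*$ at $(1,1)$. To complete your version you should either adopt this product trick or prove directly the uniform $p$-adic boundedness of the Cohen-type coefficients of the varying-weight Jacobi Eisenstein series with character (e.g., from Mizuno's explicit formulas), which is a genuinely harder route.
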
 
\begin{Rem}
\label{Rem}
If we denote by ${\boldsymbol X}:=\mathbb{Z}_p\times \mathbb{Z}/(p-1)\mathbb{Z}$ the group of the weights of $p$-adic Siegel modular forms, the sequence $\{k_m\}$ of weights in Theorem \ref{Thm2} converges automatically $(0,-\alpha )$ in ${\boldsymbol X}$ by the results \cite{Bo-Na2,Ich,Se}. 
\end{Rem}

\section{Preliminaries}
\subsection{Siegel modular forms}
Let $\mathbb{H}_n$ be the Siegel upper-half space of degree $n$. The Siegel modular group $\Gamma _n=Sp_n(\mathbb{Z})$ acts on $\mathbb{H}_n$ by the generalized fractional transformation
\begin{align*}
MZ:=(AZ+B)(CZ+D)^{-1},\quad for\ M=\begin{pmatrix}A & B \\ C & D \end{pmatrix}\in \Gamma _n.
\end{align*}
Let $N$ be a positive integer. The congruence subgroup $\Gamma _0^{(n)}(N)$ is defined by 
\begin{align*}
\Gamma _0^{(n)}(N):=\left\{ \begin{pmatrix}A & B \\ C & D \end{pmatrix}\in \Gamma _n \big| C\equiv O_n \bmod{N} \right\}. 
\end{align*}
Let $\chi $ be a Dirichlet character modulo $N$. The space $M_k(\Gamma _0^{(n)}(N),\chi )$ of Siegel modular forms of weight $k$ and character $\chi $ consists of all of holomorphic functions $f:\mathbb{H}_n\rightarrow \mathbb{C}$ satisfying  
\begin{align*}
f(MZ)=\chi (\det D)\det (CZ+D)^kf(Z),\quad for\ M=\begin{pmatrix}A & B \\ C & D \end{pmatrix}\in \Gamma _0^{(n)}(N). 
\end{align*} 
If $\chi $ is trivial, we write as $M_k(\Gamma _0^{(n)}(N))=M_k(\Gamma _0^{(n)}(N),\chi )$ simply.
If $f\in M_k(\Gamma _0^{(n)}(N),\chi )$ then $f$ has a Fourier expansion of the form
\begin{align*}
f=\sum _{O\le T\in \Lambda _n}a_f(T)e^{2\pi i {\rm tr}(TZ)},
\end{align*}
where $T$ runs over all elements of semi-positive definite of $\Lambda _n$ and 
\begin{align*}
\Lambda _n:=\{T=(t_{ij})\in Sym_n(\mathbb{Q})|t_{ii}\in \mathbb{Z},\ 2t_{ij}\in \mathbb{Z} \}. 
\end{align*} 
In this paper, we mainly deal with the case where $N$ is a prime. 

\subsection{$p$-adic Siegel modular forms}

Let $v_p$ be the normalized additive valuation on $\mathbb{Q}_p$ as $v_p(p)=1$. We consider a formal power series of the form $f=\sum _{O\le T\in \Lambda _n}a(T)e^{2\pi i {\rm tr}(TZ)}$ with $a(T)\in \mathbb{Q}_p$. For more accurate interpretation of $f$, see \cite{Bo-Na,Bo-Na2}.

\begin{Def}
A formal power series $f=\sum _{O\le T\in \Lambda _n}a(T)e^{2\pi i {\rm tr}(TZ)}$ with $a(T)\in \mathbb{Q}_p$ called a $p$-$adic$ $Siegel$ $modular$ $form$ if there exists a sequence of full modular forms $\{g_m\}\subset M_{k_m}(\Gamma _2)_{\mathbb{Q}}$ such that $\lim _{m\to \infty }g_m=f$ ($p$-adically), where the limit means that $\inf _{T\in \Lambda _n} (v_p(a_{g_m}(T)-a(T)))\to \infty $ as $m\to \infty$.  
\end{Def}
In \cite{Bo-Na}, B\"ocherer and Nagaoka showed that
\begin{Thm}[B\"ocherer-Nagaoka \cite{Bo-Na}]
\label{Bo-Na}
Let $p$ be an odd prime. If $f\in M_k(\Gamma _0^{(n)}(p))_{\mathbb{Q}}$ then $f$ is a $p$-adic Siegel modular form. 
\end{Thm}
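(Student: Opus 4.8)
The plan is to realize $f$ as a $p$-adic limit of full (level-one) modular forms by a Serre-style successive-approximation argument; after scaling I may assume that $f$ has $p$-integral Fourier coefficients. I would isolate two ingredients.

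The first is a \emph{weight-shifter} at full level: a form $E\in M_{p-1}(\Gamma _n)$ with $p$-integral coefficients and $E\equiv 1\pmod p$. The natural candidate is the suitably normalized Siegel--Eisenstein series of weight $p-1$ and degree $n$, whose Fourier coefficients are expressed through Siegel series and generalized Bernoulli quantities, so that a Clausen--von Staudt type computation forces every non-constant coefficient into $p\mathbb{Z}_{(p)}$ while the constant term is $1$. Taking prime-power exponents then yields $E^{p^{m-1}}\equiv 1\pmod{p^m}$, so I can raise the weight of any form by a multiple of $(p-1)p^{m-1}$ without disturbing its reduction modulo $p^m$; this is what renders the weights of different approximants commensurable.

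The second ingredient, which I expect to be the main obstacle, is a comparison between level $p$ and level $1$ modulo $p$: for every $h\in M_{\ell}(\Gamma_0^{(n)}(p))$ with $p$-integral coefficients there should exist a full modular form $\widetilde h\in M_{\ell+(p-1)b}(\Gamma _n)$, for some $b\ge 0$, having the same Fourier expansion modulo $p$. Geometrically this reflects the degeneration of the $\Gamma_0(p)$-structure in characteristic $p$: along the ordinary locus the level-$p$ datum is pinned down by the canonical subgroup, and the component of the mod-$p$ Siegel variety carrying the cusp at infinity maps to the level-one variety, so the $q$-expansion at infinity descends to a level-one form after twisting by a suitable power of the Siegel Hasse invariant (represented by $E$). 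For $n=1$ this is the classical theory of Serre and Swinnerton-Dyer; for general $n$ it requires the geometry of Siegel modular varieties in characteristic $p$ together with the properties of the Siegel Hasse invariant, and this is where the real work lies.

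Granting these, I would conclude by induction. Let $g_1$ be a full-level lift of the mod-$p$ form attached to $f$ by the comparison, so that $g_1\equiv f\pmod p$. Inductively, suppose the full modular form $g_m$ satisfies $g_m\equiv f\pmod{p^m}$ after matching weights by a power of $E$. Then $p^{-m}(f-g_m)$ is a $p$-integral expansion of a level-$p$ form; applying the comparison to it and lifting produces a full-level correction $\widetilde h$, and, after enlarging $b$ to a multiple $c$ of $p^m$ by multiplying $\widetilde h$ by a power of $E$ (so that $E^{c}\equiv 1\pmod{p^{m+1}}$ and all weights agree), the form
\[
g_{m+1}:=g_m\,E^{c}+p^m\,\widetilde h
\]
is again a full modular form and satisfies $g_{m+1}\equiv f\pmod{p^{m+1}}$, since $f E^{c}\equiv f$ modulo $p^{m+1}$ and the bracketed error is divisible by $p$ because $E^{c}\equiv 1\pmod p$. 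The sequence $\{g_m\}$ then converges $p$-adically to $f$, exhibiting $f$ as a $p$-adic Siegel modular form.
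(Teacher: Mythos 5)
The paper itself does not prove this statement: it is imported verbatim from the B\"ocherer--Nagaoka preprint \cite{Bo-Na}, so there is no internal proof to compare against. Measured against the actual argument of \cite{Bo-Na}, your first ingredient coincides with theirs (a degree-$n$ Siegel--Eisenstein series of weight $p-1$ congruent to $1$ mod $p$, which is their earlier result on mod $p$ properties of Siegel modular forms), but they do not pass through any mod-$p$ comparison of the modular varieties of level $p$ and level $1$. Instead they work entirely with the trace operator $\mathrm{Tr}\colon M_\ell(\Gamma_0^{(n)}(p))\to M_\ell(\Gamma_n)$, and show by an explicit coset and Fourier-coefficient estimate (involving the Fricke involution and the $U(p)$-operator) that the trace of $f\cdot\bigl(E^{(n)}_{p-1}\bigr)^{p^m}$ is congruent to $f$ modulo a power of $p$ growing with $m$; this produces the approximating sequence of full modular forms in one stroke, with no induction and no geometry.

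The genuine gap in your proposal is your second ingredient: the claim that every $h\in M_\ell(\Gamma_0^{(n)}(p))$ with $p$-integral coefficients agrees mod $p$ with some full modular form of weight $\ell+(p-1)b$. This is precisely the hard content of the theorem, and you assert it rather than prove it. For $n=1$ it is classical (Serre, Swinnerton-Dyer, Katz), but for $n\ge 2$ the geometric sketch you offer (canonical subgroup over the ordinary locus, descent from the component of the special fiber containing the cusp at infinity, twisting by the Hasse invariant) requires at least: (i) that a suitable power of the Siegel Hasse invariant lifts to characteristic zero as a level-one form of the right weight, which for small $p$ relative to $n$ is not automatic; (ii) control of the several irreducible components of the special fiber of the Siegel variety with $\Gamma_0(p)$-structure and the fact that the $q$-expansion at infinity is captured by one of them; and (iii) an extension argument across the non-ordinary locus. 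None of these are supplied, and they are not routine. The outer induction $g_{m+1}=g_mE^{c}+p^m\widetilde h$ is standard Serre scaffolding and would work once the comparison is granted (modulo the bookkeeping that $f-g_m$ is a modular form only after the weights are matched by a power of $E$), but as written the pivotal step is unestablished, whereas the published route avoids it entirely via the trace operator.
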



\subsection{Jacobi forms and their liftings}
In this subsection, we recall the known facts related Jacobi forms and their liftings. Since we do not need the general level case, we only consider the prime level case.  
 
Let $p$ be an odd prime and $\chi $ a Dirichlet character modulo $p$ with $\chi (-1)=(-1)^k$. Let $\phi $ be a Jacobi form of weight $k$, index $1$ and character $\chi $ with respect to $\Gamma _0^{(1)}(p)$. Then $\phi $ has a Fourier expansion of the form 
\begin{align*}
\phi (\tau ,z)=\sum _{n=0}^{\infty } \sum _{\substack{ r \in \mathbb{Z} \\ 4n-r^2\ge 0 }}c(n,r)q^n\zeta ^r,\quad for\ (\tau ,z)\in \mathbb{H}_1\times \mathbb{C}, 
\end{align*}
where $q:=e^{2 \pi i \tau }$ and $\zeta :=e^{2\pi i z}$. The Maass lift ${\mathcal M}\phi \in M_{k}(\Gamma ^{(2)}_{0}(p),\chi )$ of $\phi $ is described by
\begin{align*}
{\mathcal M}\phi (Z)&=\left( \frac{1}{2}L(1-k,\chi )+\sum _{n=1}^{\infty } \sum _{d|n}\chi (d)d^{k-1}q^n \right)c(0,0)\\
&+\sum _{l=1}^{\infty }\sum _{4nl-r^2\ge 0} \sum_{\substack{ d|(n,r,l) \\ (d,p)=1}} \chi (d)d^{k-1}c\left(\frac{nl}{d^2}, \frac{r}{d}\right) q^n\zeta ^rq'^l,\quad for\ Z=\begin{pmatrix}\tau & z \\ z & w \end{pmatrix}\in \mathbb{H}_2, 
\end{align*}
where $q':=e^{2\pi i w}$. 
This lift was studied by Ibukiyama. For the precise definitions of Jacobi forms with level and their liftings, see \cite{Ibu,Miz}. 

\subsection{Embeddings from $\mathbb{Q}(\mu _{p-1})$ to $\mathbb{Q}_p$}
\label{Cyc}
In this subsection, we mention that how to determine the embeddings from $\mathbb{Q}(\mu _{p-1})$ to $\mathbb{Q}_p$. 

Let $\mu _{p-1}$ denote the group of the ($p-1$)-th roots of unity in $\mathbb{C}^{\times}$. Let us take a generator $\zeta _{p-1}$ of $\mu _{p-1}$ and consider the prime ideal factorization of $p$ in the ring $\mathbb{Z}[\zeta _{p-1}]$ of integers of $\mathbb{Q}(\mu _{p-1})$. Let $\Phi (X)\in \mathbb{Z}[X]$ be the minimal polynomial of $\zeta _{p-1}$, namely $\Phi (X)$ is the cyclotomic polynomial having the root $\zeta _{p-1}$. We can always decompose $\Phi (X)$ as the form $\Phi (X)\equiv q_1(X)\cdots q_r(X)$ mod $p$, where $r=\varphi (p-1)$, each $q_i(X)$ is a polynomial of degree one with $q_i(X) \not \equiv q_j (X)$ mod $p$. Then $p$ is decomposed as a product of $r$ prime ideals $\frak{p}_i:=(q_i(\zeta _{p-1}),p)$, namely we have the perfect decomposition 
\begin{align*}
(p)=\frak{p}_1\cdots \frak{p}_r=(q_1(\zeta _{p-1}), p)\cdots (q_r(\zeta _{p-1}),p). 
\end{align*} 
If we write $q_i(X)=X-d_i$ for some $d_i\in \mathbb{Z}$, then an embedding $\sigma _i$ from $\mathbb{Q}(\zeta _{p-1})$ to $\mathbb{Q}_p$ corresponding $\frak{p}_i$ is determined by $\sigma _i(\zeta _{p-1})=\omega (d_i)$. 
\begin{Ex}
(1) {\bf Case} $p=5$ ($\zeta_{4}=i$). \\
We see easily that $\Phi (X)=X^2+1\equiv (X-2)(X-3)$ mod $5$. Putting $\frak{p}_1:=(i-2,5)$ and $\frak{p}_2:=(i-3,5)$, then $(5)=\frak{p}_1\frak{p}_2$. In fact, $(i-2,5)=(i-2)$ and $(i-3,5)=(i+2)$. Hence, the embeddings $\sigma _i$ corresponding $\frak{p}_i$ are determined by $\sigma _1(i)=\omega (2)$ and $\sigma _2(i)=\omega (3)$. \\
\noindent(2) {\bf Case} $p=7$ ($\zeta_{6}=(1+\sqrt{3}i)/2$). \\
One has $\Phi (X)=X^2-X+1\equiv (X-3)(X-5)$ mod $7$. If we set $\frak{p}_1:=(\zeta _{6}-3,5)$ and $\frak{p}_2:=(\zeta _6-5,5)$, then $7=\frak{p}_1\frak{p}_2$. Hence, the embedding $\sigma _i$ are determined by $\sigma _1(\zeta _6)=\omega (3)$ and $\sigma _2(\zeta _6)=\omega (5)$. 
\end{Ex}

\section{Proofs}
In this section, we prove our theorems. As introduced in Remark \ref{Rem}, let ${\boldsymbol X}:=\mathbb{Z}_p\times \mathbb{Z}/(p-1)\mathbb{Z}$ denote the group of $p$-adic Siegel modular forms. Following Serre's notation in \cite{Se}, let us write $\zeta ^{*}(s,u):=L_p(s,\omega ^{1-u})$ for $(s,u)\in {\boldsymbol X}$, where $L_p(s,\chi)$ is the Kubota-Leopoldt's $p$-adic $L$-function with character $\chi $ (e.g. \cite{Hida}).
\label{Proof}
\subsection{Proof of Theorem \ref{Thm2}}
We take a sequence $\{k_m=ap^m\}$ for $0<a\in \mathbb{Z}$ with $a\equiv -\alpha $ mod $p-1$. Note that $a$ is even or odd according as $\chi $ is even or odd. 

As in \cite{Ei-Za}, let $E^J_{k,1}(\tau ,z)$ be the normalized Siegel Jacobi Eisenstein series of weight $k$ and index $1$ (i.e. the constant term is $1$). It is known that its Fourier coefficients are in $\mathbb{Q}$. Moreover we denote by 
\begin{align}
\label{HeckeEisen}
&E^{(1)}_{k,\chi}=1+2L(1-k,\chi)^{-1}\sum _{n=1}^{\infty }\sum _{0<d|n}\chi (d)d^{k-1}q^n \in M_{k}(\Gamma _0^{(1)}(p),\chi ), \\
&E^{(1)}_{k}=1-\frac{2k}{B_k}\sum _{n=1}^{\infty }\sum _{0<d|n}d^{k-1}q^n \in M_{k}(\Gamma _1)
\end{align}
the normalized Eisenstein series of weight $k$ for $\Gamma _1$ and normalized Hecke's Eisenstein series of weight $k$ and character $\chi $ for $\Gamma _0^{(1)}(p)$, respectively. If we put 
\begin{align*}
\phi _{k_m}:=E_{a(p-2),\chi }^{(1)}E^{(1)}_{ap(p^{m-1}-1)}E^J_{2a,1}
\end{align*}
then we see that $\phi _{k_m}$ is a Jacobi form of weight $k_m$ and index $1$ with character $\chi $ for $\Gamma ^{(1)}_{0}(p)$. Here note that $E^{(1)}_{ap(p^{m-1}-1)}E^J_{2a,1}$ has rational Fourier coefficients. Moreover if we write its Fourier expansion as $\phi _{k_m}=\sum _{n,r}c_{k_m}(n,r)q^n\zeta ^r$, then $c_{k_m}(n,r)\in \mathbb{Q}(\mu _{p-1})$. Now we can prove
\begin{Lem}
\label{Lem2}
$\{\phi _{k_m}^{\sigma }\}$ converges in the formal power series ring $\mathbb{Q}_p[\![q,\zeta ]\!]$. Namely, each coefficient $c_{k_m}(n,r)^{\sigma }$ converges in $\mathbb{Q}_p$.  
\end{Lem}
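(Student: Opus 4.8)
The plan is to show that each of the three factors in the definition of $\phi_{k_m}$ converges $p$-adically after applying $\sigma$, and then conclude by multiplicativity that the product converges. I would first observe that the middle factor $E^{(1)}_{ap(p^{m-1}-1)}$ and the Jacobi factor $E^J_{2a,1}$ already have rational Fourier coefficients, so $\sigma$ acts trivially on them; the only genuinely $\mathbb{Q}(\mu_{p-1})$-valued factor is the Hecke Eisenstein series $E^{(1)}_{a(p-2),\chi}$, whose coefficients involve $\chi(d)$ and the special $L$-value $L(1-a(p-2),\chi)$. Since the weights $a(p-2)$, $ap(p^{m-1}-1)$, and $2a$ are fixed independently of $m$ in the first and third factors, the $m$-dependence enters only through the middle Eisenstein series $E^{(1)}_{ap(p^{m-1}-1)}$, which has rational coefficients. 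Thus after applying $\sigma$ the first and third factors are \emph{constant} in $m$, and the whole problem reduces to showing that $\sigma$ applied to the fixed Hecke Eisenstein series produces a well-defined element of $\mathbb{Q}_p[\![q,\zeta]\!]$ (it does, trivially, being a single fixed form) and that the $m$-varying rational factor $E^{(1)}_{ap(p^{m-1}-1)}$ converges coefficientwise in $\mathbb{Q}_p$.

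Carrying this out, I would analyze the Fourier coefficients of $E^{(1)}_{ap(p^{m-1}-1)}$, which have the shape $-\tfrac{2k_m'}{B_{k_m'}}\sum_{0<d\mid n} d^{k_m'-1}$ with $k_m' = ap(p^{m-1}-1)$. As $m\to\infty$ the exponent $k_m'$ tends to $0$ in $\mathbb{Z}_p$ and tends to $0 \bmod (p-1)$ by construction (since $ap(p^{m-1}-1)\equiv 0$), so by the standard theory of $p$-adic limits of divisor sums — precisely Serre's observation governing convergence of Eisenstein series via the Kummer congruences and the continuity of the Kubota–Leopoldt $p$-adic $L$-function — each divisor-sum coefficient $\sum_{(d,p)=1} d^{k_m'-1}$ converges $p$-adically, while the leading normalization $-2k_m'/B_{k_m'}$ converges through $\zeta^*$. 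The key input here is exactly the body of results cited in Remark~\ref{Rem} (\cite{Bo-Na2,Ich,Se}), which guarantee that such an Eisenstein series with $k_m'\to 0$ in $\boldsymbol{X}$ converges $p$-adically to the constant $1$.

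Finally, since $\mathbb{Q}_p[\![q,\zeta]\!]$ is a topological ring under coefficientwise $p$-adic convergence and multiplication is continuous on power series with bounded denominators, the product
\begin{align*}
\phi_{k_m}^{\sigma} = \bigl(E^{(1)}_{a(p-2),\chi}\bigr)^{\sigma}\,E^{(1)}_{ap(p^{m-1}-1)}\,E^J_{2a,1}
\end{align*}
converges coefficientwise to the product of the limits of its factors. I expect the main obstacle to be the bookkeeping needed to justify that coefficientwise convergence of the middle factor propagates to coefficientwise convergence of the full product: one must control, for each fixed pair $(n,r)$, the finitely many terms of the convolution that contribute to $c_{k_m}(n,r)^{\sigma}$ and check that the $p$-adic valuations of the varying middle-factor coefficients stay bounded below so that no cancellation-free blow-up occurs. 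Because each Fourier coefficient of the product is a \emph{finite} sum of products of coefficients from the three factors, and only the middle factor varies with $m$ (and converges), this reduces to a routine but careful verification that a finite $\mathbb{Q}(\mu_{p-1})$-linear (after $\sigma$, $\mathbb{Q}_p$-linear) combination of convergent sequences converges, which it does.
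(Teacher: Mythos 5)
Your reduction to the single $m$-varying factor $E^{(1)}_{ap(p^{m-1}-1)}$ is exactly the paper's first step, and your observations that the other two factors are fixed in $m$, that $\sigma$ acts only on the Hecke Eisenstein factor, and that each Fourier coefficient of the product is a finite sum of products are all fine. The gap is in your treatment of the middle factor. You assert that $l_m:=ap(p^{m-1}-1)$ tends to $0$ in $\mathbb{Z}_p$ and hence that $E^{(1)}_{l_m}\to 1$. Neither claim is true: $l_m=ap^{m}-ap\to -ap\neq 0$ in $\mathbb{Z}_p$, so $l_m\to(-ap,0)$ in ${\boldsymbol X}$, and the limit of $E^{(1)}_{l_m}$ is not the constant $1$ (its nonconstant coefficients converge to a nonzero multiple of the limiting divisor sums).

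More importantly, the step you actually need --- convergence of the normalizing factor $-2l_m/B_{l_m}$ --- does not follow from ``convergence through $\zeta^{*}$'' alone. Serre's Corollaire 2, which applies here precisely because the limit weight $(-ap,0)$ is \emph{not} $(0,0)$ (at $(0,0)$ the relevant value of $\zeta^{*}$ is the pole, and the corollary would not control the constant term), gives convergence of $-B_{l_m}/(2l_m)$, the constant term of the unnormalized series $G^{(1)}_{l_m}$. To pass to the reciprocal and conclude that $E^{(1)}_{l_m}=1-\frac{2l_m}{B_{l_m}}\sum_n\sigma_{l_m-1}(n)q^n$ converges, you must also show that the limit of $B_{l_m}/(2l_m)$ is nonzero. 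The paper does this via von Staudt--Clausen: for $m\ge 2$ one has $(p-1)\mid l_m$, so $p$ divides the denominator of $B_{l_m}$ exactly once, while $p$ exactly divides $l_m$, whence $v_p(B_{l_m}/(2l_m))=-2$ for all such $m$ and the limit cannot vanish. This nonvanishing argument is the essential content of the lemma and is absent from your proposal; as written, your proof does not go through.
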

\begin{proof}
Recall that 
\begin{align*}
\phi _{k_m}^{\sigma }&=(E_{a(p-2),\chi }^{(1)}E^{(1)}_{ap(p^{m-1}-1)}E^J_{2a,1})^{\sigma }=(E_{a(p-2),\chi }^{(1)})^{\sigma }E^{(1)}_{ap(p^{m-1}-1)}E^J_{2a,1}\in \mathbb{Q}_p[\![q,\zeta ]\!].  
\end{align*}
Hence we may only show that $\lim_{m\to \infty} E^{(1)}_{ap(p^{m-1}-1)} \in \mathbb{Q}_p[\![q]\!]$. To prove this, we consider the Eisenstein series 
\begin{align*}
G^{(1)}_{l_m}:=-\frac{B_{l_m}}{2l_m}E^{(1)}_{l_m}=-\frac{B_{l_m}}{2l_m}+\sum _{n=1}^{\infty }\sum _{0<d|n}d^{l_m-1}q^n,
\end{align*}
where we put $l_m:=ap(p^{m-1}-1)$ for the sake of simplicity. It is clear that $\{l_m\}$ is a Cauchy sequence. Hence there exists a limiting value $\lim _{m\to \infty }\sum _{0<d|n}d^{l_m-1}\in \mathbb{Q}_p$ for every $n\ge 1$. Since $l_m$ tends to $(-ap,0)\neq (0,0)$ in ${\boldsymbol X}$, we can apply Corollaire 2 in \cite{Se} to $G^{(1)}_{l_m}$. Therefore we see that the constant term also converges in $\mathbb{Q}_p$, namely
\begin{align*}
-\lim _{m\to \infty }\frac{B_{l_m}}{2l_m}\in \mathbb{Q}_p. 
\end{align*}
Now we shall show that this value is not zero. If $m\ge 2$ then $p-1|\l_m$. Hence the denominator of $B_{l_m}$ is divisible by $p$ according to Von-Staudt Clausen theorem. Moreover $p|\!|l_m$. Summarizing these facts, we see that the denominator of $B_{l_m}/2l_m$ is divisible by $p^2$ for every $m\ge 2$. It follows immediately from this property that  
\begin{align*}
-\lim _{m\to \infty }\frac{B_{l_m}}{2l_m}\neq 0. 
\end{align*}
Therefore we get  
\begin{align*}
\lim _{m\to \infty }E^{(1)}_{l_m}=\lim _{m\to \infty }\left(1-\frac{2l_m}{B_{l_m}}\sum _{n=1}^{\infty }\sum _{0<d|n}d^{l_m-1}q^n \right) \in \mathbb{Q}_p[\![q]\!].
\end{align*} 
This completes the proof of Lemma \ref{Lem2}. 
\end{proof} 
Let us return to the proof of Theorem \ref{Thm2}. Taking the Maass lift ${\mathcal M}\phi _{k_m}=:G_{k_m}\in M_{k_m}(\Gamma ^{(2)}_{0}(p),\chi )_{\mathbb{Q}(\mu _{p-1})}$, we have the following Fourier expansion  
\begin{align*}
G_{k_m}&=\frac{1}{2}L(1-k_m,\chi )+\sum _{n=1}^{\infty } \sum _{0<d|n}\chi (d)d^{k_m-1}(n)q^n\\
&+\sum _{l=1}^{\infty }\sum _{4nl-r^2\ge 0} \sum_{\substack{0<d|(n,r,l) \\ (p,d)=1}} \chi (d)d^{k_m-1}c_{k_m} \left(\frac{nl}{d^2}, \frac{r}{d}\right) q^n\zeta ^rq'^l. 
\end{align*} 
The $l>0$-th Fourier Jacobi coefficient is
\begin{align*}
\sum _{4nl-r^2\ge 0} \sum_{\substack{0<d|(n,r,l) \\ (p,d)=1}} \chi (d)d^{k_m-1}c_{k_m}\left(\frac{nl}{d^2}, \frac{r}{d}\right)q^n \zeta ^r.  
\end{align*}
Since $\chi (d)^{\sigma}=\omega (d)^{\alpha }=d^{\alpha }$, if we take $\sigma $, then 
\begin{align*}
\sum _{4nl-r^2\ge 0} \sum_{\substack{ 0<d|(n,r,l) \\ (p,d)=1}} d^{k_m+\alpha -1}c_{k_m}\left(\frac{nl}{d^2}, \frac{r}{d}\right)^{\sigma }q^n \zeta ^r. 
\end{align*} 
The first Fourier Jacobi coefficient is Hecke's Eisenstein series of weight $k_m$ and character $\chi $ in (\ref{HeckeEisen}). By a similar argument of Serre, we obtain 
\begin{align*}
\left( \frac{1}{2}L(1-{k_m},\chi )+\sum _{n=1}^{\infty } \sum _{0<d|n}\chi (d)d^{k_m-1}q^n \right)^\sigma = \zeta ^{*}(1-k_m,1-k_m-\alpha )+\sum _{n=1}^{\infty } \sum _{\substack{0<d|n \\ (p,d)=1}}d^{k_m+\alpha -1 }(n)q^n. 
\end{align*}

Finally, we set $G_{k_m}:=2L(1-k_m,\chi )^{-1}F_{k_m}$. Since $k_m$ tends to $(0,-\alpha )$ in ${\boldsymbol X}$, $(k_m,k_m+\alpha )$ tends to $(0,0)$ in ${\boldsymbol X}$. Note that $\zeta ^{*}(s,u)$ has a simple pole at $(1,1)$. Combining this fact with Lemma \ref{Lem2}, we see that $G_{k_m}^{\sigma }$ tends to $1$. In fact, the $q$-expansion of $G_{k_m}^{\sigma }$ is given by
\begin{align*}
G_{k_m}^{\sigma }&=1+\frac{1}{\zeta ^{*}(1-k_m,1-k_m-\alpha )}\sum _{n=1}^{\infty } \sum _{\substack{ 0<d|n \\ (p,d)=1}}d^{k_m+\alpha -1 }q^n \\
&+\frac{1}{\zeta ^{*}(1-k_m,1-k_m-\alpha )} \left( \sum _{l=1}^{\infty }\sum _{4nl-r^2\ge 0} \sum_{\substack{ 0<d|(n,r,l) \\ (p,d)=1}} d^{k_m+\alpha -1}c_{k_m}\left(\frac{nl}{d^2}, \frac{r}{d}\right)^{\sigma } q^n\zeta ^rq'^l\right). 
\end{align*}
This completes the proof of Theorem \ref{Thm2}. \qed

\subsection{Proof of Theorem \ref{ThmM}}
In order to apply Serre's argument, we start with proving that 
\begin{Lem}
\label{Lem3}
Let $f\in M_{k}(\Gamma _0^{(n)}(p))_{\mathbb{Q}(\mu _{p-1})}$. Then $f$ is a $\mathbb{Q}(\mu _{p-1})$-linear combination of elements of $M_k(\Gamma _0^{(n)}(p))_{\mathbb{Q}}$.
\end{Lem}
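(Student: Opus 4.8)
The plan is to reduce the statement to the existence of a $\mathbb{Q}$-rational structure on $M_k(\Gamma_0^{(n)}(p))$ and then finish by an elementary linear-algebra argument that reads off the coordinates of $f$ relative to a rational basis. Since the character here is trivial, the transformation law carries no factor $\chi(\det D)$, so there is no obstruction to Galois-conjugating Fourier coefficients, and the whole point is purely one of descent from $\mathbb{Q}(\mu_{p-1})$ to $\mathbb{Q}$.

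First I would invoke the known rationality result that $M_k(\Gamma_0^{(n)}(p))$ (with trivial character) admits a basis $g_1,\dots,g_d$ consisting of modular forms all of whose Fourier coefficients lie in $\mathbb{Q}$; equivalently, $M_k(\Gamma_0^{(n)}(p))_{\mathbb{Q}}\otimes_{\mathbb{Q}}\mathbb{C}=M_k(\Gamma_0^{(n)}(p))$. This is the content of the $q$-expansion principle in the arithmetic theory of Siegel modular forms of level $p$. I expect this to be the essential input of the lemma: once a $\mathbb{Q}$-rational basis is in hand, everything else is formal, so this existence statement is the real obstacle that must be supplied (or cited).

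Next, given $f\in M_k(\Gamma_0^{(n)}(p))_{\mathbb{Q}(\mu_{p-1})}\subset M_k(\Gamma_0^{(n)}(p))$, I would write $f=\sum_{i=1}^d c_i g_i$ with $c_i\in\mathbb{C}$, which is possible because $g_1,\dots,g_d$ is a $\mathbb{C}$-basis. Comparing Fourier coefficients yields $a_f(T)=\sum_{i=1}^d c_i\,a_{g_i}(T)$ for every $T\in\Lambda_n$, where $a_{g_i}(T)\in\mathbb{Q}$ and $a_f(T)\in\mathbb{Q}(\mu_{p-1})$ by hypothesis. The remaining task is to show that these complex scalars $c_i$ in fact lie in $\mathbb{Q}(\mu_{p-1})$.

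Finally, since $g_1,\dots,g_d$ are linearly independent as formal Fourier series, the matrix of their coefficients has rank $d$; hence one may select indices $T_1,\dots,T_d\in\Lambda_n$ for which the matrix $A:=(a_{g_i}(T_j))_{1\le i,j\le d}$ is invertible. As $A\in M_d(\mathbb{Q})$, also $A^{-1}\in M_d(\mathbb{Q})$, and solving the linear system obtained by restricting to $T_1,\dots,T_d$ gives $(c_1,\dots,c_d)^{\top}=(A^{\top})^{-1}(a_f(T_1),\dots,a_f(T_d))^{\top}\in\mathbb{Q}(\mu_{p-1})^d$. Thus every $c_i\in\mathbb{Q}(\mu_{p-1})$, which exhibits $f$ as a $\mathbb{Q}(\mu_{p-1})$-linear combination of $g_1,\dots,g_d\in M_k(\Gamma_0^{(n)}(p))_{\mathbb{Q}}$ and proves the lemma.
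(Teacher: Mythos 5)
Your proof is correct, and it rests on exactly the same essential input as the paper: Shimura's rationality theorem, i.e.\ $M_k(\Gamma_0^{(n)}(p))_{\mathbb{C}}=M_k(\Gamma_0^{(n)}(p))_{\mathbb{Q}}\otimes_{\mathbb{Q}}\mathbb{C}$, which you rightly identify as the real content to be cited. Where you diverge is in the descent step showing $c_i\in\mathbb{Q}(\mu_{p-1})$. The paper argues by Galois descent: for $\tau\in \mathrm{Aut}(\mathbb{C}/\mathbb{Q}(\mu_{p-1}))$ one has $f^{\tau}=\sum_i c_i^{\tau}f_i$ since the $f_i$ have rational coefficients, while $f^{\tau}=f$ because the coefficients of $f$ lie in $\mathbb{Q}(\mu_{p-1})$; uniqueness of the expansion then forces $c_i^{\tau}=c_i$ for all $\tau$, hence $c_i\in\mathbb{Q}(\mu_{p-1})$. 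You instead solve for the $c_i$ explicitly: linear independence of the $g_i$ as Fourier series (with rational entries, so rank is insensitive to field extension) yields indices $T_1,\dots,T_d$ with $(a_{g_i}(T_j))$ invertible over $\mathbb{Q}$, and inverting this rational matrix against the vector $(a_f(T_j))\in\mathbb{Q}(\mu_{p-1})^d$ places the $c_i$ in $\mathbb{Q}(\mu_{p-1})$. Your route is more elementary and self-contained (no appeal to automorphisms of $\mathbb{C}$, hence no implicit use of the axiom of choice to produce them), and it actually gives a constructive recipe for the coefficients; the paper's Galois argument is shorter and generalizes immediately to any intermediate field without choosing a distinguished set of Fourier indices. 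Both are complete proofs.
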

\begin{proof}
It holds that $M_k(\Gamma _0^{(n)}(p))_\mathbb{C}=M_k(\Gamma _0^{(n)}(p))_\mathbb{Q}\otimes \mathbb{C}$ by Shimura's result \cite{Shi}. This fact tells us that $f\in M_k(\Gamma _0^{(n)}(p))_\mathbb{Q}(\mu _{p-1})$ is uniquely written in the form $f=\sum _{i=1}^{N}c_if_i$ for some $c_i\in \mathbb{C}$ and $f_i\in M_{k}(\Gamma _0^{(n)}(p))_{\mathbb{Q}}$. For each $\tau \in Aut(\mathbb{C}/\mathbb{Q}(\mu _{p-1}))$, $f^{\tau }=\sum _{i=1}^Nc_i^{\tau }f_i$ because each $f_i$ has rational Fourier coefficients. On the other hand, since Fourier coefficients of $f$ are in $\mathbb{Q}(\mu _{p-1})$, we have $f^{\tau }=f=\sum _{i=1}^Nc_if_i$. It follows from uniqueness of description of $f$ that $c_i^\tau =c_i$. The assertion follows.      
\end{proof}
We are now in a position to prove our main theorem. 
\begin{proof}[Proof of Theorem \ref{ThmM}] For any $F\in M_{k}(\Gamma _0^{(2)}(p),\chi )_{\mathbb{Q}(\mu _{p-1})}$, take a sequence of modular forms $\{G_{k_m}\in M_{k_m}(\Gamma _0^{(2)}(p),\chi ^{-1})\}$ constructed in Theorem \ref{Thm2}. We consider $FG_{k_m}\in M_{k+k_m}(\Gamma _0^{(2)}(p))_{\mathbb{Q}(\mu _{p-1})}$. Note here that each $k+k_m$ is even. Applying Lemma \ref{Lem3} to each $FG_{k_m}$, $FG_{k_m}$ is a $\mathbb{Q}(\mu _{p-1})$-linear combination of elements of $M_{k+k_m}(\Gamma _0^{(2)}(p))_{\mathbb{Q}}$. Hence, $(FG_{k_m})^\sigma =F^{\sigma }G_{k_m}^{\sigma }$ is a $p$-adic Siegel modular form according to Theorem \ref{Bo-Na}. Since $G_{k_m}^{\sigma }$ tends to $1$, $F^{\sigma }G_{k_m}^{\sigma }$ tends to $F^{\sigma }$. Thus $F^{\sigma }$ is a $p$-adic Siegel modular form. This completes the proof of Theorem \ref{ThmM}. 
\end{proof}

\section{For generalization}
\label{Gen}
In this section, we mention some remarks for generalization. 

If the following problem is affirmative, then we can generalize Theorem \ref{ThmM} to the case of any degree.   
\begin{Prob}
\label{Prob1}
Let $k$ be a positive integer and $p$ an odd prime. For any Dirichlet character $\chi $ modulo $p$ with $\chi (-1)=(-1)^k$, we take $\alpha \in \mathbb{Z}/(p-1)\mathbb{Z}$ such that $\chi ^{\sigma }=\omega ^{\alpha }$. Then, does there exist a sequence of Siegel modular forms $\{G_{k_m}\in M_{k_m}(\Gamma _0^{(n)}(p),\chi )_{\mathbb{Q}(\mu _{p-1})}\}$ such that 
\begin{align*}
\lim _{m\to \infty }G_{k_m}^{\sigma }=1\quad (p{\text -adically})?
\end{align*}
\end{Prob}
Now we raise one more question which is equivalent to this problem.
\begin{Prob}
\label{Prob2}
Let $p$, $\chi $ and $\alpha $ be same as above. We take an integer $a$ such that $a\equiv -\alpha$ mod $p-1$. Then, does there exist a modular form $G_{a}\in M_{a}(\Gamma _0^{(n)}(p),\chi )_{\mathbb{Q}(\mu _{p-1})}$ such that 
\begin{align*}
G_{a}^{\sigma }\equiv 1 \bmod{p}? 
\end{align*}
\end{Prob}
\begin{Rem}
(1) If $\alpha =0$ (i.e. $p-1|a$), then this problem is affirmative by B\"ocherer-Ngaoka's result. \\
(2) If this problem is affirmative, then we can solve Probem \ref{Prob1} affirmatively by putting $G_{k_m}:=G_a^{p^m}$. 
\end{Rem}

\section*{Acknowledgment}
The author would like to thank Professor S.~Nagaoka for suggesting this problem. Macro 1
The fact of Lemma \ref{Lem3} was communicated by Professor S.~B\"ocherer during his stay at Kinki university. The author would like to thank Professor S.~B\"ocherer. 




\end{document}